\documentclass[a4paper,leqno,12pt]{amsart}
\usepackage[leqno]{amsmath}
\usepackage{amstext,amssymb,amsthm,enumitem}
\usepackage{graphicx}
\usepackage{tikz}
\usepackage[utf8]{inputenc}
\usepackage{float}
\setlength{\textheight}{23.0cm} \setlength{\textwidth}{16cm}
\setlength{\oddsidemargin}{0cm} \setlength{\evensidemargin}{0cm}
\setlength{\topmargin}{0cm}

\newtheorem*{theorem*}{Theorem}
\newtheorem{theorem}{Theorem}
\newtheorem{lemma}{Lemma}
\newtheorem{proposition}{Proposition}
\theoremstyle{definition}
\newtheorem*{prof*}{Proof}
\newtheorem*{proof*}{Proof of Theorem 1}
\newtheorem*{proofL}{Proof of Lemma 1}
\newtheorem*{proof1}{Proof of Proposition 1}
\newtheorem*{proof2}{Proof of Proposition 2}

\tikzset{dots/.append style={ultra thick, fill=none}}

\begin{document}
	
	\title[BMO spaces for nondoubling metric measure spaces]{BMO spaces for nondoubling metric measure spaces}
	
	\author{Dariusz Kosz}
	\address{ 
		\newline Faculty of Pure and Applied Mathematics
		\newline Wroc\l{}aw University of Science and Technology 
		\newline Wyb. Wyspia\'nskiego 27 
		\newline 50-370 Wroc\l{}aw, Poland
		\newline \textit{Dariusz.Kosz@pwr.edu.pl}	
	}

	\begin{abstract} In this article we study the family of $BMO^p$ spaces, $p \geq 1$, in the general context of metric measure spaces. We give a characterization theorem that allows to describe all possible relations between these spaces considered as sets of functions. Examples illustrating the obtained cases and some additional results related to the John--Nirenberg inequality are also included.
		
	\medskip	
	\noindent \textbf{2010 Mathematics Subject Classification.} Primary 42B35, 46E30.
	
	\medskip
	\noindent \textbf{Key words:} $BMO$ space; metric measure space; non-doubling measure; John--Nirenberg inequality.  	
	\end{abstract}
	
	\thanks{
	%	$\\ \noindent \textit{2010 Mathematics Subject Classification.}$ Primary 42B35, 46E30.
	%	$\\ \noindent \textit{Key words:}$ $BMO$ space; metric measure space; non-doubling measure; John--Nirenberg inequality.
	The author is supported by the National Science Centre of Poland, project no. 2016/21/N/ST1/01496.	
	} 
	
	\maketitle
	\section{Introduction}
	
	$BMO$ is a function space which traditionally occurs in the literature as an object associated to the space $\mathbb{R}^d$, $d \geq 1$, equipped with the Euclidean metric and Lebesgue measure. Roughly speaking, it contains functions whose mean oscillation over a given cube $Q \subset \mathbb{R}^d$ is bounded uniformly with respect to the choice of that cube. Although $BMO$ was introduced by John and Nirenberg in \cite{JN} in the context of partial differential equations, it is also a very useful tool in harmonic analysis. One reason is that many of the operators considered there turn out to be bounded from $L^\infty$ to $BMO$ even though they are not always bounded on $L^\infty$. This, in turn, can often be used to prove the boundedness of such operators on $L^p$ for some $p \in (1, \infty)$ by using the interpolation theorem obtained by Fefferman and Stein in \cite{FS}. Another interesting thing concerns the fact that $BMO$ is dual to the Hardy space, $H^1$, which is of great use in harmonic analysis. This result was first shown by Fefferman in \cite{F}. Finally, $BMO$ functions are in close relation with other objects appearing in this field such as Carleson measures, paraproducts or commutator operators (see, e.g., \cite{C, ChS, Ch, G} for further consideration). 
	
	It is well known that most of the theory mentioned above can be developed in more general contexts that include metric measure spaces with measures which are doubling. However, the situation changes significantly if we want a measure to be completely arbitrary. Namely, many fundamental results obtained in the case of Lebesgue measure cannot be easily adapted to the non-doubling setting. In particular, there is less flexibility in using various covering lemmas in an effective way. Consequently, we have examples showing that some of the classical theorems fail to occur in certain non-doubling situations (see, e.g., \cite{A, S} for studying the weak type $(1,1)$ boundedness of the Hardy--Littlewood maximal operator), while, in contrast, some theorems can be proved for wider classes of spaces, usually requiring more complicated methods (see, e.g., \cite{NTV, T} where the boundedness of the Cauchy integral operator was studied). 
	
	Nevertheless, $BMO$ spaces for non-doubling spaces were quite successfully studied by Mateu, Mattila, Nicolau and Orobitg in \cite{MMNO}. Among other things, the authors have shown that for many Borel measures on $\mathbb{R}^d$, not necessary doubling, it is possible to define $BMO$ space in such a way as to be able to use an interpolation argument analogous to that received in \cite{FS}. On the other side, a somewhat surprising fact shown in \cite{MMNO} is that there exist measures on $\mathbb{R}^2$ for which the associated spaces $BMO$ and $BMO_b$ defined with an aid of cubes and balls, respectively, do not coincide. Another result, which will be mentioned in this paper later on, is related to some untypical behavior of the family of spaces $BMO_b^p$, $p \geq 1$, which occurs under certain conditions. In summary, there are many examples in \cite{MMNO} which illustrate that in some specific situations $BMO$ spaces may have very unusual properties. This idea also accompanies the present article. 
	
	The main motivation of this work is to study the spaces $BMO_b^p$, $p \geq 1$, considered as sets of function, in order to describe whether the natural inclusions between them are proper or not. Theorem 1 stated in Section 2 gives the characterization of all the possible cases related to this issue. Throughout the paper we deal with arbitrary metric measure spaces and hence balls determined by metrics are used to define $BMO_b^p$ spaces. From now on we omit the subscript $b$ and write $BMO^p$ instead of $BMO_b^p$.  
	
	%Recall that $\mu$ is doubling if there exists a universal constant $C > 0$ such that for every ball $B(x,r)$ we have $\mu(B(x, 2r)) \leq C \mu(B(x, r))$. 
	\section{Main result}
	
	Let $\mathbb{X} = (X, \rho, \mu)$ be a metric measure space, where $\rho$ is a metric and $\mu$ is a Borel measure such that the measure of each ball is finite and strictly positive. For a locally integrable function $f$ and an open ball $B$ we denote the average value of $f$ on $B$ by
	\begin{displaymath}
	f_B = \frac{1}{\mu(B)} \int_B f(x) d\mu(x).
	\end{displaymath}
	Then, for a parameter $p \geq 1$, we introduce the space $BMO^p(\mathbb{X})$ as the space consisting of $f$'s satisfying
	\begin{displaymath}
	\|f\|_{\ast,p} := \sup_{B \subset X} \big( \frac{1}{\mu(B)} \int_B |f(x)-f_B|^p  d\mu(x) \big)^{1/p} < \infty,
	\end{displaymath}
	where the supremum is taken over all balls contained in $X$.  
	We keep to the rule that two functions are identified if they differ by a constant. With this additional assumption $\| \cdot \|_{\ast,p}$ satisfies the norm properties and thus $BMO^p(\mathbb{X})$ can be viewed as a Banach space (it is a mathematical folklore that $BMO^p(\mathbb{X})$ is complete in any setting). If $p=1$, then we will usually write shortly $BMO(\mathbb{X})$ or $\|f\|_\ast$ instead of $BMO^1(\mathbb{X})$ or $\|f\|_{\ast, 1}$.
	
	%Obviously, if $p = 1$, then the space defined above can be seen as a natural counterpart of the standard $BMO$ space in the general situation of metric measure spaces (the only difference is that here we use balls instead of cubes). Hence we will usually write shortly $BMO(\mathbb{X})$ or $\|f\|_\ast$ instead of $BMO^1(\mathbb{X})$ or $\|f\|_{\ast, 1}$.
	%%(it is easy to show that $BMO^p(\mathbb{X})$ is complete by using the completeness of the space $L^{p}(\mathbb{X})$)
	
	Recall that by using H\"older's inequality, for $1 \leq p_1 < p_2 < \infty$, we have $\| \cdot \|_{\ast, p_1} \leq \| \cdot \|_{\ast, p_2}$ and hence $BMO^{p_2}(\mathbb{X}) \subset BMO^{p_1}(\mathbb{X})$. Consequently, if $BMO^{p_1}(\mathbb{X})$ and $BMO^{p_2}(\mathbb{X})$ coincide as sets, then the corresponding norms are equivalent. In fact, this is the case when $\mu$ is doubling, that is $\mu(B(x, 2r)) \leq C \mu(B(x, r))$ with a constant $C > 0$ independent of $x \in X$ and $r > 0$. Indeed, one can obtain that all the spaces $BMO^p(\mathbb{X})$, $p \geq 1$, coincide by using the John--Nirenberg inequality which is true for spaces with the doubling condition (see, e.g.,  \cite[Theorem A, p. 563]{MMNO}). However, the John--Nirenberg inequality fails to occur in general. Moreover, in \cite{MMNO} the authors were able to construct a (non-doubling) space $\mathbb{X}$ for which there exists $f \in BMO(\mathbb{X})$ such that $f \notin BMO^p(\mathbb{X})$, $p>1$. Here we go further and describe precisely which types of relations between the spaces $BMO^p(\mathbb{X})$, $p \geq 1$, are possible to occur. Namely, we prove the following.
	
	\begin{theorem}
		Let $\mathbb{X} = (X, \rho, \mu)$ be a metric measure space. Then we have one of the three possibilities:
	\begin{enumerate}[label=(\alph*)]
		\item all the spaces $BMO^p(\mathbb{X})$, $p \geq 1$, coincide,
		\item there exists $p_0 > 1$ such that $BMO^p(\mathbb{X})$ coincides with $BMO(\mathbb{X})$ if $p < p_0$ and $BMO^{p_1}(\mathbb{X}) \subsetneq BMO^{p_2}(\mathbb{X})$ for any $1 \leq p_1 < p_2 < \infty$ if $p_2 \geq p_0$,
		\item there exists $p_0 \geq 1$ such that $BMO^p(\mathbb{X})$ coincides with $BMO(\mathbb{X})$ if $p \leq p_0$ and $BMO^{p_1}(\mathbb{X}) \subsetneq BMO^{p_2}(\mathbb{X})$ for any $1 \leq p_1 < p_2 < \infty$ if $p_2 > p_0$.
	\end{enumerate}
	Conversely, for each of the cases described above and for any permissible choice of $p_0$ (while considering one of the last two cases) we can construct $\mathbb{X}$ for which the associated spaces $BMO^p(\mathbb{X})$, $p \geq 1$, realize the desired properties.
	\end{theorem}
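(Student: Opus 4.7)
The plan is to split the forward implication into a self-improvement lemma plus a short structural argument, and to handle the converse by explicit construction. I would begin by defining the coincidence set
$$P = \{p \geq 1 : BMO^p(\mathbb{X}) = BMO^1(\mathbb{X})\},$$
which contains $1$. Hölder's inequality forces $P$ to be downward closed in $[1, \infty)$, so it must have one of the forms $\{1\}$, $[1, p_0]$, $[1, p_0)$, or $[1, \infty)$; these correspond, respectively, to case (c) with $p_0 = 1$, case (c) with $p_0 > 1$, case (b) (forcing $p_0 > 1$), and case (a). So the set-theoretic part of the trichotomy is automatic, and what remains is to establish the strict inclusions outside $P$.

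The central ingredient is the following self-improvement lemma: \emph{if $BMO^{p_1}(\mathbb{X}) = BMO^{p_2}(\mathbb{X})$ for some $1 \leq p_1 < p_2 < \infty$, then both coincide with $BMO^1(\mathbb{X})$.} For $p_1 = 1$ this is vacuous; for $1 < p_1 < p_2$, the open mapping theorem (both spaces are Banach under their respective norms and one inclusion is automatically continuous) yields a constant $C$ with $\|f\|_{\ast, p_2} \leq C \|f\|_{\ast, p_1}$ for every $f$. Applying log-convexity of $L^p$-norms on each ball $B$ with the probability measure $d\mu / \mu(B)$ to $g = f - f_B$ and taking the supremum over $B$ gives
$$\|f\|_{\ast, p_1} \leq \|f\|_{\ast, 1}^{\theta} \, \|f\|_{\ast, p_2}^{1-\theta}, \qquad \theta = \frac{1/p_1 - 1/p_2}{1 - 1/p_2} \in (0, 1).$$
Substituting the previous estimate and isolating $\|f\|_{\ast, p_1}$ produces $\|f\|_{\ast, p_1} \leq C^{(1-\theta)/\theta} \|f\|_{\ast, 1}$, proving $BMO^1 \subset BMO^{p_1}$; the reverse inclusion is Hölder. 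With the lemma in hand the strict inclusions outside $P$ are immediate: if $p_2 \notin P$ and $1 \leq p_1 < p_2$, then $BMO^{p_1} = BMO^{p_2}$ would force $p_2 \in P$ by the lemma, a contradiction.

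For the converse direction, case (a) is realized by any space satisfying the John--Nirenberg inequality, for instance $\mathbb{R}^d$ with Lebesgue measure. For (b) and (c) with arbitrary admissible $p_0$, I would build non-doubling metric measure spaces in the spirit of \cite{MMNO}, presumably with a discrete or tree-like skeleton, in which the ball radii and the distribution of point masses can be calibrated so that a carefully chosen test function lies in $BMO^p(\mathbb{X})$ precisely for $p < p_0$ or $p \leq p_0$ as required. The main obstacle is exactly this final step: producing a sufficiently flexible family of examples to cover every permissible $p_0 \in [1, \infty)$ \emph{and} to separately realize the open and closed endpoint variants demands rather delicate tuning of the geometry against the measure, and this is where I expect the bulk of the paper's technical work to be concentrated.
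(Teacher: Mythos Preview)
Your forward direction is correct but takes a genuinely different route from the paper. The paper's key lemma is a \emph{scaling} statement: if $BMO^{p_1}(\mathbb{X}) \subsetneq BMO^{p_2}(\mathbb{X})$ then $BMO^{\alpha p_1}(\mathbb{X}) \subsetneq BMO^{\alpha p_2}(\mathbb{X})$ for every $\alpha > 1$, proved constructively by taking a real $f \in BMO^{p_1} \setminus BMO^{p_2}$, choosing balls $B_N$ with large $p_2$-oscillation, and checking by hand that the functions $g_N(x) = \mathrm{sgn}\bigl(f(x) - f_{B_N}\bigr)\,|f(x) - f_{B_N}|^{1/\alpha}$ have $\|g_N\|_{\ast,\alpha p_1}$ uniformly bounded while $\|g_N\|_{\ast,\alpha p_2} \to \infty$. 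Your self-improvement lemma (equality at any pair $p_1<p_2$ forces equality with $BMO^1$) is more directly tailored to the trichotomy and the open-mapping-plus-log-convexity argument is clean; the tradeoff is that the paper's proof is elementary and actually manufactures separating functions at every level, whereas yours leans on completeness of $BMO^p$ (which the paper records as folklore) and the open mapping theorem.

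One small gap worth flagging: the bound $\|f\|_{\ast,p_1} \le C^{(1-\theta)/\theta}\|f\|_{\ast,1}$ you obtain is only an \emph{a priori} estimate, valid for $f$ already in $BMO^{p_1}$; as written it does not yet give the inclusion $BMO^1 \subset BMO^{p_1}$. This is routinely repaired by truncation: for real $f \in BMO^1$ set $f_N = \max(-N,\min(N,f)) \in L^\infty \subset BMO^{p_1}$, observe that $1$-Lipschitz post-composition controls $\|\cdot\|_{\ast,1}$ via the double-integral form of the oscillation, apply your bound to $f_N$, and let $N\to\infty$ using Fatou. For the converse you have correctly located the paper's technical centre of gravity: it builds explicit discrete ``test spaces'' (branching trees with counting measure, the branch sizes in the $n$th generation calibrated against $p_0$) that realize every instance of (b) and (c), and handles (a) with any doubling space.
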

	
	The proof of Theorem 1 is placed in Section 3 and it is based on certain results of a rather technical nature which are proved later on.
	
	\section{Proof of Theorem 1}
	
	In this section we prove Theorem 1. To do this we use two ingredients which we formulate here and prove in Sections 4 and 5, respectively. The first one is the following.
	
	\begin{lemma} Let $\mathbb{X} = (X, \rho, \mu)$ be a metric measure space. If $BMO^{p_1}(\mathbb{X}) \subsetneq BMO^{p_2}(\mathbb{X})$ for some $1 \leq p_1 < p_2 < \infty$, then for any $\alpha > 1$ we have $BMO^{\alpha p_1}(\mathbb{X}) \subsetneq BMO^{\alpha p_2}(\mathbb{X})$.
	\end{lemma}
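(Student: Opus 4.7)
The natural strategy is to pass from a witness for the first strict inclusion to a witness for the scaled one by composing with a Hölder nonlinearity. Take $f \in BMO^{p_1}(\mathbb{X}) \setminus BMO^{p_2}(\mathbb{X})$ (so that $\|f\|_{\ast, p_1} < \infty$ but $\|f\|_{\ast, p_2} = \infty$), and set
\[
g(x) := |f(x)|^{1/\alpha} \operatorname{sgn}(f(x)) = \phi(f(x)), \qquad \phi(t) := |t|^{1/\alpha} \operatorname{sgn}(t).
\]
The map $\phi$ is $(1/\alpha)$-Hölder on $\mathbb{R}$ with $|\phi(s) - \phi(t)| \le 2\,|s - t|^{1/\alpha}$; this exponent is exactly what is needed to convert $p$-th power control of $f$ into $(\alpha p)$-th power control of $g$.

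To verify $g \in BMO^{\alpha p_1}(\mathbb{X})$, I would bound the centered oscillation by a double integral via Jensen: for every ball $B$,
\[
\frac{1}{\mu(B)} \int_B |g - g_B|^{\alpha p_1} \, d\mu \le \frac{1}{\mu(B)^2} \int_B \!\! \int_B |g(x) - g(y)|^{\alpha p_1}\, d\mu(x)\, d\mu(y).
\]
Inserting the Hölder bound $|g(x) - g(y)|^{\alpha p_1} \le 2^{\alpha p_1} |f(x) - f(y)|^{p_1}$ and then majorizing $|f(x) - f(y)|^{p_1} \le 2^{p_1 - 1}\bigl(|f(x) - f_B|^{p_1} + |f(y) - f_B|^{p_1}\bigr)$ yields $\|g\|_{\ast, \alpha p_1} \le C\, \|f\|_{\ast, p_1}^{1/\alpha} < \infty$ for a universal constant $C$.

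The real work is in showing $g \notin BMO^{\alpha p_2}(\mathbb{X})$. Choose balls $B_n$ with $\frac{1}{\mu(B_n)} \int_{B_n} |f - f_{B_n}|^{p_2}\, d\mu \to \infty$ and try to promote this to an analogous blow-up for $g$ at exponent $\alpha p_2$. The obstacle is that $\phi'(t)\sim |t|^{1/\alpha - 1}$ is small wherever $|t|$ is large, so on balls on which the mean $|f_{B_n}|$ dominates the local fluctuation of $f$ the nonlinearity flattens $f$ out and the oscillation of $g$ on $B_n$ is suppressed by a factor of order $|f_{B_n}|^{1/\alpha - 1}$, potentially cancelling the blow-up. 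My plan is to exploit the fact that $BMO$ is considered modulo constants and to replace $f$ by $f - c$ for a carefully chosen $c$, and then split into two cases: (i) when a single shift $c$ renders the sequence $f_{B_n}$ bounded, the pointwise comparison between $\phi(f - c)$ and $\phi(f_{B_n} - c)$ is available and delivers the required lower bound $\frac{1}{\mu(B_n)} \int_{B_n} |g - g_{B_n}|^{\alpha p_2} \, d\mu \to \infty$; (ii) when the means diverge under any shift, the global constraint $\|f\|_{\ast, p_1} < \infty$ forces the variation of $f_{B_n}$ across different bad balls to reveal itself as oscillation of $g$ on larger concentric balls covering several of the $B_n$.

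Step 1 should be routine Hölder bookkeeping and presents no real difficulty. The main obstacle, and the location of essentially all of the technical work, will be Step 2, where the smoothing produced by $\phi$ must be balanced quantitatively against the blow-up of $f$'s $p_2$-oscillation so that the loss in the nonlinear distortion never exceeds what the scaling $p \mapsto \alpha p$ of the $BMO$ exponent can absorb.
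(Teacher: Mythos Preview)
Your Step~1 is essentially the paper's argument and is fine. The genuine gap is Step~2, specifically your case~(ii). In a general metric measure space there is no mechanism forcing the bad balls $B_n$ to interact: they may be mutually far apart, and there need not exist any ball that contains several of them (and even if such balls exist, their measure may swamp the oscillation you want to detect). So the phrase ``the variation of $f_{B_n}$ across different bad balls reveals itself as oscillation of $g$ on larger concentric balls'' has no content in this generality, and I do not see how to rescue it. Worse, one can cook up situations in which a \emph{single} $g=\phi(f-c)$ really does land in $BMO^{\alpha p_2}$ for every fixed shift $c$, because on each $B_n$ the flattening factor $|f_{B_n}-c|^{1/\alpha-1}$ kills the blow-up; the structure of $f\in BMO^{p_1}$ is not strong enough to prevent this.

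The paper sidesteps the problem entirely by giving up on a single witness. It uses the observation (recorded just after the definition of $BMO^p$) that if $BMO^{\alpha p_1}(\mathbb{X})=BMO^{\alpha p_2}(\mathbb{X})$ as sets then the two norms are equivalent. Hence to get a strict inclusion it suffices to exhibit a \emph{sequence} $\{g_N\}$ with $\|g_N\|_{\ast,\alpha p_1}\le C$ uniformly and $\|g_N\|_{\ast,\alpha p_2}\to\infty$. The paper takes $g_N=\phi(f-f_{B_N})$, i.e.\ exactly your construction but with an $N$-dependent shift chosen so that the shifted $f$ has mean zero on the bad ball $B_N$. Your Step~1 bound gives $\|g_N\|_{\ast,\alpha p_1}\le C\|f\|_{\ast,p_1}^{1/\alpha}$ uniformly. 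For the lower bound, since $(f-f_{B_N})_{B_N}=0$ one is automatically in (a version of) your case~(i), and a short case analysis on the size of $(g_N)_{B_N}$ finishes the job. The moral: allow the shift $c$ to depend on $N$ and argue via non-equivalence of norms rather than hunting for one function.
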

	
	The second thing we need is to find a suitable family of spaces $\mathbb{X}$ for which some specific relations between the associated spaces $BMO^p(\mathbb{X})$, $p \geq 1$, occur. The process of constructing such spaces is the most technical part of this article. We obtain two complementary propositions stated below.
	
	\begin{proposition}
	Fix $p_0 > 1$. There exists a space $\mathbb{X}$ such that $BMO^p(\mathbb{X})$ coincides with $BMO(\mathbb{X})$ if and only if $p < p_0$. 
	\end{proposition}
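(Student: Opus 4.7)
The plan is to build $\mathbb{X}$ as a discrete, hierarchically structured metric measure space in which every BMO function is forced to have its level-set distribution on each ball decaying at rate $\lambda^{-p_0}$. Since the layer-cake identity
\[
\|f\|_{\ast,p}^{p}=\frac{p}{\mu(B)}\int_0^\infty\lambda^{p-1}\,\mu(\{x\in B:|f(x)-f_B|>\lambda\})\,d\lambda
\]
produces an integrand that is integrable precisely for $p<p_0$ (locally) and divergent at $\infty$ precisely for $p\ge p_0$, such a decay drives the transition at $p_0$ exactly.

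Concretely, I would take a distinguished base point $x_0$ of unit mass and, for each $n\ge1$, attach a cluster $C_n$ decomposed as $C_n=G_n^1\sqcup\cdots\sqcup G_n^{k_n}$ with $k_n\to\infty$, tuning masses so that inside the ball $B_n=\{x_0\}\cup C_n$ the relative masses satisfy $\mu(G_n^k)/\mu(B_n)\sim 2^{-kp_0}$. The points would be positioned via an ultrametric, or via a suitably stretched embedding in $\mathbb{R}$, so that the only balls worth analyzing are singletons, entire sub-clusters $G_n^k$, and the nested ``onion'' sets $B_{n,k}=\{x_0\}\cup G_n^1\cup\cdots\cup G_n^k$; no ball should contain a small mixed cross-section of different sub-clusters. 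This rigidity in the ball geometry is what drives both halves of the proposition.

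For the strict inclusion when $p\ge p_0$, the canonical bad function is $f(x_0)=0$ and $f\equiv 2^k$ on each $G_n^k$. A direct computation on $B_n$ gives $f_{B_n}=O(1)$ and
\[
\mu(\{x\in B_n:|f(x)-f_{B_n}|>\lambda\})/\mu(B_n)\sim\lambda^{-p_0}\quad\text{for }1\le\lambda\le 2^{k_n},
\]
so $\|f\|_{\ast}\lesssim 1$ while $\|f\|_{\ast,p_0}^{p_0}\gtrsim k_n\to\infty$; H\"older then gives $f\notin BMO^p(\mathbb{X})$ for every $p\ge p_0$. Oscillations on sub-balls of type $B_{n,k}$ and $G_n^k$ are controlled by the same computation applied at lower truncation, so $f\in BMO(\mathbb{X})$.

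For the coincidence when $p<p_0$, the task is to prove a universal John--Nirenberg-type decay: every $f$ with $\|f\|_\ast\le1$ satisfies
\[
\mu(\{x\in B:|f(x)-f_B|>\lambda\})\le C\mu(B)\lambda^{-p_0}
\]
for every ball $B$. Once established, integration against $\lambda^{p-1}$ yields $\|f\|_{\ast,p}\le C_p\|f\|_\ast$ for every $p<p_0$ and hence $BMO^p(\mathbb{X})=BMO(\mathbb{X})$ as sets. The main obstacle is precisely this decay, because it must hold for \emph{every} BMO function, not just the canonical bad example. I would exploit the rigid ball geometry: BMO forces any $f$ to be approximately constant on each sub-cluster $G_n^k$ up to an error controlled by $\|f\|_\ast$, and the BMO bounds on consecutive means $|f_{B_{n,k-1}}-f_{B_{n,k}}|$ telescope across scales, so that the level set $\{|f-f_B|>\lambda\}$ at height $\lambda\sim 2^k$ is trapped inside a union of sub-clusters whose total relative mass is $\lesssim\lambda^{-p_0}$ by the prescribed mass ratios. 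This inductive propagation of the BMO condition across the nested onion sets is the delicate combinatorial step, and it is here that the whole construction has to be tight.
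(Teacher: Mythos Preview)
Your overall strategy --- a discrete hierarchical space with counting measure, the layer-cake formula, and a target level-set decay $\mu(\{|f-f_B|>\lambda\})\lesssim\mu(B)\lambda^{-p_0}$ --- is exactly the paper's approach, and your bad function is the right object. The gap is in the other direction, the step you yourself flag as ``delicate'': in your geometry the onion balls $B_{n,k}$ are simply \emph{not enough} to force the $\lambda^{-p_0}$ decay on an arbitrary BMO function.

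Here is a concrete counterexample inside your own space. Since $\sum_{k\ge1}2^{-kp_0}<1$, the base point $x_0$ carries a positive fraction of $\mu(B_n)$, so $\mu(B_{n,k})\asymp\mu(B_n)$ and $f_{B_{n,k}}\asymp f(x_0)$ for every $k$. Now take $f(x_0)=0$ and $f\equiv c_j:=2^{jp_0}/j^2$ on $G_n^j$. On each onion ball,
\[
\frac{1}{\mu(B_{n,k})}\int_{B_{n,k}}|f-f_{B_{n,k}}|\,d\mu
\;\lesssim\;\sum_{j\le k}\frac{\mu(G_n^j)}{\mu(B_n)}\,|c_j|
\;=\;\sum_{j\le k}2^{-jp_0}\cdot\frac{2^{jp_0}}{j^2}
\;\le\;\sum_{j\ge1}j^{-2}<\infty,
\]
so $f\in BMO(\mathbb X)$ with norm $O(1)$. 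But for any $p>1$,
\[
\frac{1}{\mu(B_n)}\int_{B_n}|f-f_{B_n}|^p\,d\mu
\;\gtrsim\;\sum_{j\le k_n}2^{-jp_0}\Bigl(\frac{2^{jp_0}}{j^2}\Bigr)^{p}
\;=\;\sum_{j\le k_n}\frac{2^{jp_0(p-1)}}{j^{2p}}\;\longrightarrow\;\infty,
\]
so $f\notin BMO^p(\mathbb X)$ for \emph{every} $p>1$. Your space therefore lands in case $p_0=1$, not at the prescribed $p_0>1$. The telescoping bound $|f_{B_{n,k-1}}-f_{B_{n,k}}|\lesssim 1$ is true but useless here: all the onion means are already $O(1)$, while the value on $G_n^j$ can drift away from $f_{B_{n,j}}$ by as much as $\mu(B_{n,j})/\mu(G_n^j)\asymp 2^{jp_0}$.

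What the paper does to close this gap is to enrich the ball family with many \emph{two-point} balls: the metric is arranged so that each pair of ``neighboring'' nodes (in particular the roots of consecutive branches $S_{n,i}$ and $S_{n,i+1}$) is itself a ball. Then $\|f\|_\ast\le1$ forces $|f(x)-f(y)|\le 2$ across every such pair, so any BMO function is Lipschitz along the tree and can vary by at most $\sim 2l$ between branch $n$ and branch $n-l$. With branch masses tuned so that $|S_{n,n-l}|/|T_n|\asymp l^{-p_0-1}$, this pointwise control immediately yields the $\lambda^{-p_0}$ level-set bound. If you want to salvage your construction, you need an analogous mechanism --- e.g.\ balls consisting of $G_n^{k}\cup G_n^{k+1}$ with comparable masses --- that pins $|c_{k+1}-c_k|$, not just the onion averages.
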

	
	\begin{proposition}
		Fix $p_0 \geq 1$. There exists a space $\mathbb{X}$ such that $BMO^p(\mathbb{X})$ coincides with $BMO(\mathbb{X})$ if and only if $p \leq p_0$. 
	\end{proposition}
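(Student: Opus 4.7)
The plan is to construct, for a given $p_0 \geq 1$, a metric measure space $\mathbb{X} = (X, \rho, \mu)$ realizing case (c) of Theorem 1 with threshold $p_0$. I would take $X$ as a countable set written as a disjoint union $X = \bigsqcup_{n \in \mathbb{N}} X_n$ of finite ``clusters'', equipped with an atomic Borel measure and a metric that makes the clusters pairwise very far apart. The separation ensures that every nontrivial ball in $\mathbb{X}$ either lies entirely inside a single cluster or contains whole clusters plus portions of others; the second situation's oscillation contribution can be reduced to the first one. Consequently the $BMO^p$ norm of any locally integrable $f$ is determined, up to a bounded factor, by its oscillation on a manageable family of ``essential'' balls, one batch per cluster.

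The heart of the construction is to tune each $X_n$ --- its cardinality, atom weights, and intra-cluster distances --- so that two things happen simultaneously. First, for every $f$ supported on $X_n$ and every essential ball $B \subseteq X_n$, the per-ball oscillation ratio
\[
R_p(f, B) = \bigg(\tfrac{1}{\mu(B)}\int_B|f-f_B|^p\,d\mu\bigg)^{1/p} \bigg/ \bigg( \tfrac{1}{\mu(B)}\int_B|f-f_B|\,d\mu\bigg)
\]
is bounded by a constant $C(p)$ uniformly in $n$ and $f$ for every $p \in [1, p_0]$. Second, there is an explicit family of test functions whose cluster-restricted ratios $R_p$ tend to infinity with $n$ for every $p > p_0$. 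The first property immediately yields $\|f\|_{\ast, p} \leq C(p) \|f\|_{\ast}$ and hence $BMO^p(\mathbb{X}) = BMO(\mathbb{X})$ for $p \leq p_0$; the second, after assembling the per-cluster witnesses into a global function with amplitudes calibrated so that each cluster contributes $O(1)$ to the mean oscillation but an unbounded amount to the $p$-th mean oscillation, produces $f \in BMO(\mathbb{X}) \setminus BMO^p(\mathbb{X})$ for every $p > p_0$.

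For $p_0 = 1$ the construction collapses to the two-point cluster $X_n = \{c_n, s_n\}$ with $\mu(\{c_n\}) = 1$ and $\mu(\{s_n\}) = \epsilon_n \to 0$: the explicit two-point oscillation formula gives $\|f|_{X_n}\|_{\ast, p} \asymp |f(s_n) - f(c_n)|\,\epsilon_n^{1/p}$, the identity $BMO = BMO^1$ is tautological, and $f(c_n) = 0$, $f(s_n) = \epsilon_n^{-1}$ witnesses failure at every $p > 1$. For $p_0 > 1$ the clusters must be richer; I envision a central unit-mass atom together with satellites of multiple weight scales arranged in nested sub-clusters, with parameters chosen so that several essential balls of distinct mass ratios coexist and collectively force $R_p$ to remain bounded on the closed interval $[1, p_0]$ while exploding immediately beyond.

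The main obstacle is precisely this last tuning step --- an extremal analysis of weighted oscillations on finite atomic balls, tight enough both to secure the endpoint bound at $p = p_0$ (the feature distinguishing Proposition 2 from Proposition 1) and to produce test functions witnessing failure strictly above $p_0$. Once the right family of clusters is identified and the per-ball ratio estimates are in hand, assembling cluster-level bounds into the global inclusion $BMO(\mathbb{X}) \subseteq BMO^{p_0}(\mathbb{X})$ and gluing cluster-level witnesses into a single $f \in BMO(\mathbb{X}) \setminus BMO^p(\mathbb{X})$ are routine.
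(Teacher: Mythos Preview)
Your approach for $p_0 = 1$ is fine, but the strategy you outline for $p_0 > 1$ has a structural obstruction: for any finite atomic ball $B$, the supremum of your per-ball ratio
\[
\sup_{f} R_p(f, B) \;=\; \sup_{g \colon g_B = 0,\ g \not\equiv 0}\ \frac{\bigl(\tfrac{1}{\mu(B)}\int_B |g|^p\,d\mu\bigr)^{1/p}}{\tfrac{1}{\mu(B)}\int_B |g|\,d\mu}
\]
is comparable to $w_{\min}(B)^{1/p - 1}$, where $w_{\min}(B)$ is the smallest atom weight divided by $\mu(B)$. (For the upper bound combine $\|g\|_p^p \leq \|g\|_\infty^{p-1}\|g\|_1$ with $\|g\|_\infty \leq \|g\|_1 / w_{\min}$; for the lower bound test on the centered indicator of the lightest atom.) Consequently $\sup_n \sup_f R_p(f, B_n)$ is either finite for \emph{every} $p \geq 1$ (if $\inf_n w_{\min}(B_n) > 0$) or infinite for \emph{every} $p > 1$ (if $\inf_n w_{\min}(B_n) = 0$). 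No choice of atom weights can make the per-ball ratio exhibit a threshold at an intermediate $p_0$; the ``tuning step'' you flag as the main obstacle is not merely hard but impossible in the framework you have set up.

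The paper avoids this by abandoning the per-ball viewpoint. It uses counting measure and a metric that creates both a large supply of two-point balls (pairs of neighbors) and a nested family of large balls $T_n$ --- not separated clusters. The inequality $\|f\|_{\ast,p} \leq C_p \|f\|_\ast$ is proved by a cross-scale argument: the hypothesis $\|f\|_\ast \leq 1$, applied to the two-point balls, forces $|f(x)-f(y)|\leq 2$ for all neighbors, and this Lipschitz-type constraint is then fed into a level-set estimate for $\{x \in T_n : |f(x)-f(x_{n,n,0})| > 2l\}$ on the large balls. The threshold at $p_0$ comes from tuning the branch sizes so that $|S_{n,i}|/|T_n| \asymp (\log n + 1)^{-1}\bigl((n-i+1)^{-p_0} - (n-i+2)^{-p_0}\bigr)$; the extra $(\log n + 1)^{-1}$ factor, absent in the construction for Proposition~1, is exactly what converts the open condition $p < p_0$ into the closed condition $p \leq p_0$. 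The essential mechanism --- information from small balls controlling oscillation on large balls --- is precisely what a per-ball ratio bound on isolated clusters discards.
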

	
	Now, Theorem 1 follows easily from the results mentioned above.
	
	\begin{proof*}
		Let $\mathbb{X}$ be a metric measure space. Denote 
		\begin{displaymath}
		p_0 = \sup\{p \in [1, \infty) \colon BMO^p(\mathbb{X}) = BMO(\mathbb{X})\}.
		\end{displaymath}	
		The case $p_0 = \infty$ corresponds to $(a)$. Thus, assuming $p_0 < \infty$, we have two possibilities: $BMO^{p_0}(\mathbb{X})$ coincides with $BMO(\mathbb{X})$ or not. We analyze only the first one, which corresponds to the case $(c)$ from Theorem 1 (the second one can be considered in a similar way). Obviously, we have that $p_0 \geq 1$ and $BMO^{p}(\mathbb{X})$ coincides with $BMO(\mathbb{X})$ for each $p \leq p_0$. Now, take any $1 \leq p_1 < p_2 < \infty$ with $p_2 > p_0$. If $p_1 \leq p_0$, then $BMO^{p_1}(\mathbb{X}) \subsetneq BMO^{p_2}(\mathbb{X})$ holds by the definition of $p_0$. On the other hand, if $p_1 > p_0$, then there exists $\alpha > 1$ such that $p_1 / \alpha \leq p_0 < p_2/\alpha$. Hence, for that $\alpha$, we have $BMO^{p_1/ \alpha}(\mathbb{X}) \subsetneq BMO^{p_2/\alpha}(\mathbb{X})$ and by using Lemma 1 we conclude that $BMO^{p_1}(\mathbb{X}) \subsetneq BMO^{p_2}(\mathbb{X})$.
		
		The second part of Theorem 1 can be deduced by using the class of spaces obtained in Propositions 1 and 2 which exhausts all the possibilities associated with the cases $(b)$ and $(c)$. Since the case $(a)$ can be simply realized by any metric measure space satisfying the doubling condition, we receive the full characterization of all possible relations between the spaces $BMO^p(\mathbb{X})$, $p \geq 1$. $\raggedright \hfill \qed$
	\end{proof*}
	
	\section{Proof of Lemma 1}
	
	This section is entirely devoted to the proof of Lemma 1. It is worth mentioning here that it is possible to formulate the lemma in a more general form than the one presented in the previous section. Namely, the proof does not rely on the fact that balls were used to define the spaces $BMO^p(\mathbb{X})$, $p \geq 1$. Thus, the conclusion remains true if one considers the spaces $BMO^p(\mathbb{X})$ introduced with an aid of an arbitrary base, that is a fixed family of subsets of $X$, instead.
	
	\begin{proofL}
		Suppose that $BMO^{p_1}(\mathbb{X}) \subsetneq BMO^{p_2}(\mathbb{X})$ for some $1 \leq p_1 < p_2 < \infty$ and fix $\alpha >1$. We begin with a simple observation that it suffices to find a sequence $\{g_N\}_{N=1}^\infty$ satisfying $\|g_N\|_{\ast, \alpha p_1} \leq C$ uniformly in $N$ and $\lim_{N \rightarrow \infty} \|g_N\|_{\ast, \alpha p_2} = \infty$. 
		
		%Indeed, let us assume that $BMO^{\alpha p_1}(\mathbb{X}) = BMO^{\alpha p_2}(\mathbb{X})$ and consider the identity map $\mathbb{I} \colon BMO^{\alpha p_2}(\mathbb{X}) \rightarrow BMO^{\alpha p_1}(\mathbb{X})$. Notice that, with our assumption, $g_N \in BMO^{\alpha p_2}(\mathbb{X})$, $N \in \mathbb{N}$, and $\mathbb{I}$ is a continuous bijection between Banach spaces. We use the bounded inverse theorem to deduce that $\mathbb{I}^{-1}$ is also continuous, which in turn easily leads to a contradiction since
		%\begin{displaymath}
		%\lim_{N \rightarrow \infty} \frac{\| \mathbb{I}^{-1} (g_N) \|_{\ast, p_2}}{\|g_N \|_{\ast, p_1}} = \lim_{N \rightarrow \infty} \frac{\| g_N \|_{\ast, p_2}}{\|g_N \|_{\ast, p_1}} = \infty.
		%\end{displaymath}
		
		Take $f \in BMO^{p_1}(\mathbb{X}) \setminus BMO^{p_2}(\mathbb{X})$ and write $f = f_1 + i f_2$, where $f_1$ and $f_2$ are real-valued functions. Observe that at least one of the functions $f_i$, $i \in \{1,2\}$, also lies in $BMO^{p_1}(\mathbb{X}) \setminus BMO^{p_2}(\mathbb{X})$. Therefore, we can assume $f$ to be real-valued. 
		
		Consider an arbitrary $N \in \mathbb{N}$ and choose a ball $B_N \subset X$ such that
		\begin{equation}\label{a1}
		\frac{1}{\mu(B_N)} \int_{B_N} |f - f_{B_N}|^{p_2} \, d\mu \geq N.
		\end{equation}
		Then take $f_N = f - f_{B_N}$ and introduce $g_N$ by
		\begin{displaymath}
		g_N(x) = {\rm sgn} (f_N(x)) \cdot |f_N(x)|^{1/\alpha}.
		\end{displaymath}
		
		Our first goal is to show that $\|g_N\|_{\ast, \alpha p_1} \leq C$ uniformly in $N$. It will be convenient at this point to notice that we have
		\begin{align}\label{a2}
		\begin{split}
		\frac{1}{\mu(B)} \int_B |h - h_B|^p \, d\mu & \leq \frac{1}{\mu(B)^2} \int_B \int_B |h(x) - h(y)|^p \, d\mu(x) \, d\mu(y) \\ 
		& \leq \frac{2^p}{\mu(B)} \int_B |h - h_B|^p \, d\mu,
		\end{split}
		\end{align}
		for any $p \geq 1$, $B \subset X$ and $h$ which is locally integrable. Take an arbitrary ball $B$ and note that (\ref{a2}) implies
		\begin{equation}\label{a3}
		\frac{1}{\mu(B)^2} \int_B \int_B |f_N(x) - f_N(y)|^{p_1} \, d\mu(x) \, d\mu(y) \leq 2^{p_1} \|f_N\|_{\ast, p_1}^{p_1} = 2^{p_1} \|f\|_{\ast, p_1}^{p_1}.
		\end{equation} 
		We would like to receive a similar estimate for $g_N$ and $\alpha p_1$ instead of $f_N$ and $p_1$, respectively. Take any two points $x$ and $y$ contained in $B$. If $g_N(x)$ and $g_N(y)$ are of the same sign, then
		\begin{equation*}
		|g_N(x) - g_N(y)|^{\alpha p_1} = \big| \, |f_N(x)|^{1/ \alpha} - |f_N(y)|^{1/ \alpha} \, \big|^{\alpha p_1} \leq    |f_N(x) - f_N(y)|^{p_1}.
		\end{equation*} 
		On the other hand, if, for instance, $g_N(x) > 0$ and $g_N(y) \leq 0$, then we obtain
		\begin{align*}
		\begin{split}
		|g_N(x) - g_N(y)|^{\alpha p_1} \leq 2^{\alpha p_1} (g_N(x)^{\alpha p_1} + (-g_N(y))^{\alpha p_1}) & = 2^{\alpha p_1} (f_N(x)^{p_1} + (-f_N(y))^{p_1}) 
		\\ & \leq 2^{\alpha p_1} |f_N(x) - f_N(y)|^{p_1}.
		\end{split}
		\end{align*} 
		Combining (\ref{a3}) with the last two estimates gives
		\begin{displaymath}
		\frac{1}{\mu(B)^2} \int_B \int_B |g_N(x) - g_N(y)|^{\alpha p_1} \, d\mu(x) \, d\mu(y) \leq 2^{(1 + \alpha)p_1} \|f\|_{\ast, p_1}^{p_1},
		\end{displaymath} 
		which, by using (\ref{a2}) one more time, results in the desired inequality $\|g_N\|_{\ast, \alpha p_1} \leq 2^{1+ \alpha} \|f\|_{\ast, p_1}$.
		
		Now, the only thing left to do is to estimate $\|g_N\|_{\ast, \alpha p_2}$ from below. Namely, for a fixed $M > 0$ we take $N$ satisfying
		\begin{equation}\label{a4}
		2^{-\alpha p_2} N - 2^{\alpha p_2} (M+1)^{\alpha p_2} \geq M,
		\end{equation}  
		and show that
		\begin{equation}\label{a5}
		\frac{1}{\mu(B_N)} \int_{B_N} |g_N - (g_N)_{B_N}|^{\alpha p_2} \, d\mu \geq M.
		\end{equation}
		We consider two cases, $|(g_N)_{B_N}| \leq M+1$ and $(g_N)_{B_N} < -M-1$ (in the case $(g_N)_{B_N} > M+1$ one can replace $f_N$ and $g_N$ by $-f_N$ and $-g_N$, respectively). If $|(g_N)_{B_N}| \leq M+1$, then we use the following estimates: for $x \in B_N$ such that $|g_N(x)| > 2(M+1)$,
		\begin{equation}\label{a6}
		|g_N(x) - (g_N)_{B_N}|^{\alpha p_2} \geq 2^{-\alpha p_2} |g_N(x)|^{\alpha p_2} = 2^{-\alpha p_2} |f_N(x)|^{p_2},
		\end{equation}
		and for $x \in B_N$ such that $|g_N(x)| \leq 2(M+1)$,
		\begin{equation}\label{a7}
		|g_N(x) - (g_N)_{B_N}|^{\alpha p_2} \geq 0 \geq |g_N(x)|^{\alpha p_2} - 2^{\alpha p_2} (M+1)^{\alpha p_2} = |f_N(x)|^{p_2} - 2^{\alpha p_2} (M+1)^{\alpha p_2}.
		\end{equation}
		Applying (\ref{a1}), (\ref{a4}), (\ref{a6}) and (\ref{a7}) we obtain
		\begin{align}\label{a8}
		\begin{split}
		\int_{B_N} |g_N - (g_N)_{B_N}|^{\alpha p_2} \, d\mu & \geq
		2^{- \alpha p_2} \int_{B_N} |f_N|^{p_2} \, d\mu - 2^{\alpha p_2} (M+1)^{\alpha p_2} \mu(B_N) \\
		&  \geq \Big( 2^{- \alpha p_2} N - 2^{\alpha p_2} (M+1)^{\alpha p_2} \Big) \mu(B_N) \\ & \geq M \mu(B_N).
		\end{split}
		\end{align} 
		
		In turn, if $(g_N)_{B_N} < -M-1$, then, equivalently,
		\begin{displaymath}
		\int_{B_N} (f_N - g_N) \, d\mu > (M+1) \mu(B_N).
		\end{displaymath}
		Let $U_N = \{x \in B_N \colon g_N(x) \geq 1\}$. Observe that for any $y \in B_N \setminus U_N$ we have $f_N(y) - g_N(y) \leq 1$ and hence
		\begin{equation}\label{a9}
		\int_{U_N} (f_N - g_N) \, d\mu > M \mu(B_N).
		\end{equation}
		Therefore, by using the definition of $U_N$, the fact that $(g_N)_{B_N} < 0$ and (\ref{a9}) we receive
		\begin{align}\label{a10}
		\begin{split}
		\int_{B_N} |g_N - (g_N)_{B_N}|^{\alpha p_2} \, d\mu \geq \int_{U_N} g_N^{\alpha p_2} \, d\mu & = \int_{U_N} f_N^{p_2} \, d\mu \\ & \geq \int_{U_N} (f_N - g_N) \, d\mu \\ & > M \mu(B_N).
		\end{split}
		\end{align} 
		Finally, (\ref{a5}) is a consequence of (\ref{a8}) and (\ref{a10}).
	\end{proofL}
	
	\section{Test spaces}
	
	In this section we present a simple method of constructing metric measure spaces $\mathbb{X} = (X, \rho, | \, \cdot \, |)$ with specific properties of the associated spaces $BMO^p(\mathbb{X})$, $p \geq 1$. Here $| \, \cdot \, |$ refers to the counting measure which is the only measure that will be considered in Sections 5 and 6. Before reading the exact description of the constructed spaces, it may be helpful to take a look at Figure 1 presented later on in this section.
	
	We use the term \textit{test space} for each $\mathbb{X}$ built in the following way. Let $M = \{m_{n,i}  \colon i=1, \dots, n ,\ n \in \mathbb{N}\}$ be a fixed triangular matrix of positive integers with $m_{1,1}=1$. Define 
	\begin{displaymath}
	X = X_M = \{x_{n,i,j} \colon j=0, \dots, m_{n,i}, \ i=1, \dots, n, \ n \in \mathbb{N}\} ,
	\end{displaymath}
	where all elements $x_{n,i,j}$ are pairwise different. By $S_{n,i}$ we denote the branch $S_{n,i} = \{ x_{n,i,0}, x_{n,i,1}, \dots , x_{n,i,m_{n,i}} \}$. Later on we use also auxilliary symbols $S_n = \cup_{i=1}^{n} S_{n,i}$, $T_n = \cup_{k=1}^{n} S_k$ and the function $\vee \colon X \times X \rightarrow \mathbb{N}$ defined by $\vee(x,y) = \min \{n \in \mathbb{N} \colon \{x,y\} \subset T_n \}$. We introduce the metric $\rho$ on $X$ determining the distance between two different elements $x$ and $y$ by the formula
	\begin{displaymath}
	\rho(x,y) = \left\{ \begin{array}{rl}
	n+\frac{1}{2} & \textrm{if } \{x,y\} = \{x_{n,n,0}, x_{n+1,1,0}\} \textrm{ for some } n \in \mathbb{N},  \\
	
	n-\frac{1}{2i+1} & \textrm{if } x_{n,i,0} \in \{x,y\} \subset S_{n,i} \textrm{ for some } 1 \leq i \leq n, n \in \mathbb{N},  \\
	
	n-\frac{1}{2i+2} & \textrm{if } \{x,y\} = \{x_{n,i,0}, x_{n,i+1,0}\} \textrm{ for some } 1 \leq i \leq n-1, n \in \mathbb{N},  \\
	
	\vee(x,y) & \textrm{otherwise. }  \end{array} \right. 
	\end{displaymath}
	At first glance, such a metric may look a little strange. However, its main advantage lies in the arrangement of balls containing exactly two points which we call \textit{pair of neighbors} later on. Moreover, any ball that cannot be covered by at least one of the sets $\mathcal{N}_x := \{x\} \cup \{y \colon y \text{ is a neighbor of } x\}$, $x \in X$, must be of the form $T_n$ or $T_n \cup \{x_{n+1, 1, 0}\}$ for some $n \geq 2$. These two properties make the associated $BMO^p(\mathbb{X})$ spaces easier to deal with. Figure 1 shows a model of the space $(X, \rho)$ with particular emphasis on the fact that each two neighboring points are connected by a solid line.
	
	\begin{figure}[H]
		\begin{tikzpicture}
		 [scale=.9,auto=left]
		% [scale=.8,auto=left,every node/.style={circle,fill,inner sep=2pt}]
		\node[style={circle,fill,inner sep=2pt}, label={[yshift=-1cm]$x_{1,1,0}$}] (k0) at (1.5,1) {};
		\node[style={circle,fill,inner sep=2pt}, label={$x_{1,1,1}$}] (k1) at (0.75,2.25)  {};
		\node[style={circle,fill,inner sep=2pt}, label={[yshift=-0.05cm]$x_{1,1,m_{1,1}}$}] (k2) at (2.25,2.25)  {};
		\node[dots] (k4) at (1.5,2.25)  {...};
		
		\node[style={circle,fill,inner sep=2pt}, label={[yshift=-1cm]$x_{2,1,0}$}] (l0) at (4,1) {};
		\node[style={circle,fill,inner sep=2pt}, label=$x_{2,1,1}$] (l1) at (3,3.25)  {};
		\node[style={circle,fill,inner sep=2pt}, label={[yshift=-0.05cm]$x_{2,1,m_{2,1}}$}] (l2) at (5,3.25)  {};
		\node[dots] (l4) at (4,3.25)  {...};
		
		\node[style={circle,fill,inner sep=2pt}, label={[yshift=-1cm]$x_{2,2,0}$}] (m0) at (8,1) {};
		\node[style={circle,fill,inner sep=2pt}, label=$x_{2,2,1}$] (m1) at (6.5,4.5)  {};
		\node[style={circle,fill,inner sep=2pt}, label={[yshift=-0.05cm]$x_{2,2,m_{2,2}}$}] (m2) at (9.5,4.5)  {};
		\node[dots] (m4) at (8,4.5)  {...};
		
		\node[style={circle,fill,inner sep=2pt}, label={[yshift=-1cm]$x_{3,1,0}$}] (n0) at (13,1) {};
		\node[style={circle,fill,inner sep=2pt}, label=$x_{3,1,1}$] (n1) at (11,6)  {};
		\node[style={circle,fill,inner sep=2pt}, label={[yshift=-0.05cm]$x_{3,1,m_{3,1}}$}] (n2) at (15,6)  {};
		\node[dots] (n4) at (13,6)  {...};
		
		\node[dots] (o) at (17,1)  {...};
		
		%\node[draw,text width=4cm] at (2,-2) {some text spanning three lines with automatic line breaks};
		
		\draw (k0) -- (l0) node [midway, fill=white, above=-22.5pt] {$\frac{3}{2}$};
		\draw (l0) -- (m0) node [midway, fill=white, above=-22.5pt] {$\frac{7}{4}$};
		\draw (m0) -- (n0) node [midway, fill=white, above=-22.5pt] {$\frac{5}{2}$};
		
		\draw (k0) -- (k2) node [midway, fill=white, left=10pt, above=-10pt] {$\frac{2}{3}$};
		\draw (l0) -- (l2) node [midway, fill=white, left=12.5pt, above=-10pt] {$\frac{5}{3}$};
		\draw (m0) -- (m1) node [midway, fill=white, left=10pt, above=-10pt] {$\frac{9}{5}$};
		\draw (m0) -- (m2) node [midway, fill=white, left=-10pt, above=-10pt] {$\frac{9}{5}$};
		\draw (n0) -- (n1) node [midway, fill=white, left=12.5pt, above=-10pt] {$\frac{8}{3}$};
		\draw (n0) -- (n2) node [midway, fill=white, left=-12.5pt, above=-10pt] {$\frac{8}{3}$};
		
		%\draw (k0) -- (k4) node [midway, fill=white, above=-9pt] {$\frac{2}{3}$};
		
		\foreach \from/\to in {k0/k1, k0/k2, l0/l1, l0/l2, n0/n1, n0/n2, m0/m1, m0/m2, k0/l0, l0/m0, m0/n0, n0/o}
		\draw (\from) -- (\to);
		
		%\draw (k0) -- (l0) node [midway, fill=white, above=-10pt] {$3/2$};
		\end{tikzpicture}
		\caption{The model of the space $(X, \rho)$.}
	\end{figure}
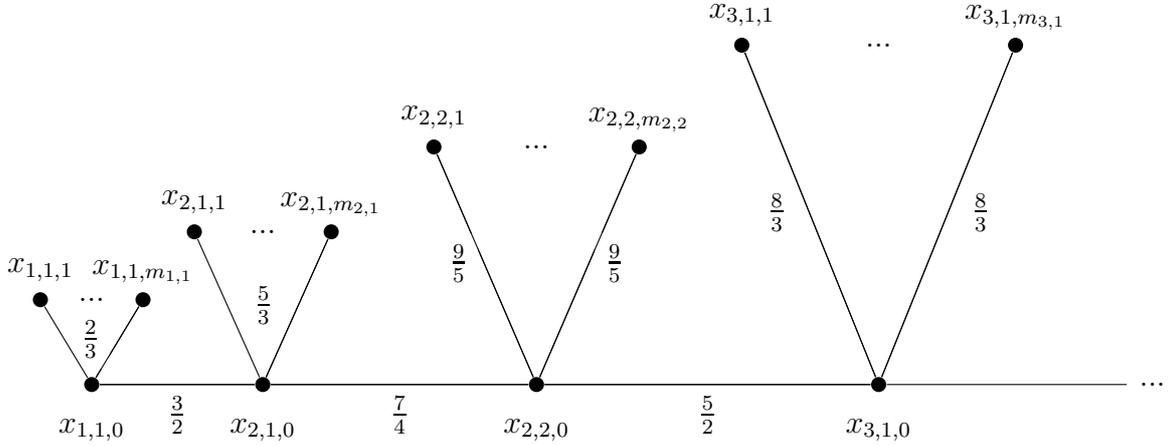
	
	Let us fix $p_0 > 1$. Our intention is to choose the matrix $M$ in such a way as to obtain that $BMO^p(\mathbb{X}) = BMO(\mathbb{X})$ if and only if $p < p_0$. We construct $M$ inductively. Namely, for each $n \geq 2$, supposing that the values $m_{k,i}$, $i = 1, \dots, k$, $k < n$, have already been chosen, we take
	\begin{equation} \tag{C1} \label{b1}
	m_{n, i} = \Big\lfloor \frac{b_{n} }{(n - i + 1)^{p_0}} - \frac{b_{n} }{(n - i + 2)^{p_0}} \Big\rfloor, \qquad i=1, \dots, n, 
	\end{equation} 
	where $\lfloor \, \cdot \, \rfloor$ is the floor function and $b_n$ is an even positive integer so large that
	\begin{equation} \tag{C2} \label{b2}
	|T_{n-1}| \leq \min \Big\{ \Big\lfloor \frac{b_{n} }{(n + 1)^{p_0}} - \frac{b_{n} }{(n + 2)^{p_0}} \Big\rfloor, \frac{b_{n}}{n^{2p_0}}\Big\}. 
	\end{equation}
	We need some auxilliary estimates. First, observe that from (\ref{b1}), (\ref{b2}) and the fact that $b_{n}$ is even it follows that $b_{n} / 2 \leq |T_{n}| \leq 2  b_{n}$. Moreover, for each $i=1, \dots, n$, we have
	\begin{equation}\label{b3}
	\frac{|S_{n, i}|}{|T_{n}|} \leq \frac{4m_{n,i}}{b_{n}} \leq 4 \, \Big( \frac{1}{(n - i + 1)^{p_0}} - \frac{1}{(n - i + 2)^{p_0}} \Big), 
	\end{equation}
	and
	\begin{equation}\label{b4}
	\frac{|S_{n, i}|}{|T_{n}|} \geq \frac{m_{n,i}}{2b_{n}} \geq \frac{1}{4} \, \Big( \frac{1}{(n - i + 1)^{p_0}} - \frac{1}{(n - i + 2)^{p_0}} \Big). 
	\end{equation}
	We are ready to prove Proposition 1.
	
	%\begin{proposition}
	%Fix $p_0 > 1$ and let $\mathbb{X} = (X, \rho, | \, \cdot \, |)$ be the test space with $M$ defined by using (\ref{b1}) and (\ref{b2}). Then for each $1 < p < p_0$ there exists $C_p > 0$ such that $\|f\|_{\ast, p} \leq C_p \|f\|_\ast$ for every $f \in BMO(\mathbb{X})$. On the other hand, there exists $g \in BMO(\mathbb{X})$ such that $g \notin BMO^{p_0}(\mathbb{X})$. 	
	%\end{proposition}
	
	\begin{proof1}
	For a fixed $p_0 > 1$ we let $\mathbb{X} = (X, \rho, | \, \cdot \, |)$ be the test space with $M$ defined by using (\ref{b1}) and (\ref{b2}). 
	
	First we show that for each $1 < p < p_0$ there exists $C_p > 0$ such that $\|f\|_{\ast, p} \leq C_p \|f\|_\ast$ for every $f \in BMO(\mathbb{X})$. Take $f \in BMO(\mathbb{X})$ and $1 < p < p_0$. Without any loss of generality we can assume that $\|f\|_\ast = 1$. Observe that then we have $|f(x) - f(y)| \leq 2$ whenever $x$ and $y$ are neighbors. Hence for each $B \subset X$ we have at least one of the two possibilities:
		\begin{enumerate}[label=(\alph*)]
			\item $B \subset \mathcal{N}_x$ for some $x \in X$ and then, by the triangle inequality, 
			\begin{displaymath}
			\max \{ |f(y) - f(z)| \colon y, z \in B\} \leq 4,
			\end{displaymath}
			\item $B$ is of the form $T_n$ or $T_n \cup \{x_{n+1,1,0}\}$ for some $n \geq 2$.	
		\end{enumerate}
		If (a) holds, then we obtain the trivial bound
		\begin{equation}\label{b5}
		\frac{1}{|B|} \sum_{x \in B} |f(x) - f_B|^p \leq 4^p.
		\end{equation}
		In turn, if (b) holds, then we fix $n \geq 2$ and assume that $B = T_{n}$ or $B = T_{n}\cup \{x_{n+1,1,0}\}$. Let $E'_{l} = \{x \in B \colon |f(x) - f(x_{n, n, 0)}| > l\}$ for $l \in \mathbb{N}$. In each of the two cases, $\{x_{n+1,1,0}\} \in B$ or not, by using (\ref{b2}) and (\ref{b3}), we get the following estimates: for $l=1, \dots, n-1$,
		\begin{equation}\label{b6}
		\frac{|E'_{2l}|}{|B|} \leq \frac{| T_{n-1} \cup \bigcup_{i=1}^{n-l} S_{n,i} |}{|T_{n}|}  \leq \frac{4}{(l+1)^{p_0}}, 
		\end{equation}
		for $l=n, \dots, n^2$,
		\begin{equation}\label{b7}
		\frac{|E'_{2l}|}{|B|} \leq \frac{| T_{n-1}|}{|T_{n}|}  \leq \frac{2}{n^{2p_0}},
		\end{equation}
		and, finally, for $l > n^2$,
		\begin{equation}\label{b8}
		|E'_{2l}| = 0.
		\end{equation}
		Moreover, recall the well known fact that for any $a \in \mathbb{C}$ we have
		\begin{equation}\label{b9}
		\sum_{x \in B} |f(x) - f_B|^p \leq 2^p \sum_{x \in B} |f(x) - a|^p.
		\end{equation}
		Therefore, by using (\ref{b6}), (\ref{b7}), (\ref{b8}) and (\ref{b9}) we obtain
		\begin{align}\label{b10}
		\begin{split}
		\frac{1}{|B|} \sum_{x \in B} |f(x) - f_B|^p & \leq \frac{2^p}{|B|} \sum_{x \in B} |f(x) - f(x_{n,n,1})|^p \\ &
		= \frac{2^p}{|B|} \int_0^\infty p \, \lambda^{p-1} |\{x \in B \colon |f(x) - f(x_{n,n,1})| > \lambda \}| \, d \lambda \\ &
		\leq \frac{p \, 2^{p+1} }{|B|} \sum_{l=0}^\infty (2l+2)^{p-1} |E'_{2l}|
		= p \, 4^p \sum_{l=0}^\infty \frac{(l+1)^{p-1} \cdot |E'_{2l}|}{|B|} \\ &
		\leq p \, 4^p \Big( 1 + \sum_{l=1}^{n-1} \frac{4 \cdot (l+1)^{p-1}}{(l+1)^{p_0}} + n^2 \cdot \frac{2(2n^2)^{p-1}}{n^{2p_0}} \Big) \\& 
		\leq  p \, 4^p \big( 1 + 4 \sum_{l=1}^\infty l^{p-p_0-1} + 2^p \big).
		\end{split}
		\end{align}
		Combining (\ref{b5}) and (\ref{b10}) shows that  
		\begin{displaymath}
		\sup_{B \subset X} \frac{1}{|B|} \sum_{x \in B} |f(x) - f_B|^p \leq C_p^p,
		\end{displaymath}
		independently of $f$, $\|f\|_\ast = 1$, and $B$.
		
		Now we prove that there exists $g \in BMO(\mathbb{X})$ such that $g \notin BMO^{p_0}(\mathbb{X})$. We start with a simple remark. Namely, for $f$ such that $|f(x) - f(y)| \leq 2$ for any neighboring points $x$ and $y$ and $B$ of the form $T_{n}$ or $T_{n} \cup \{x_{n+1, 1, 0}\}$, $n \geq 2$, the average value of $f$ over $B$ does not differ too much from $f(x_{n, n, 0})$. More precisely, by using (\ref{b2}), (\ref{b3}) and the estimate $|B| \geq b_{n}/2$ we get
		\begin{align}\label{b11}
		\begin{split}
		|f_B - f(x_{n, n, 0})| & \leq 2 + \frac{2}{|B|} \sum_{l=1}^{\infty} |\{x \in B \colon |f(x) - f(x_{n, n, 0})| > 2l \}| \\ &
		\leq 2 + \frac{2}{|T_{n}|} \Big( \sum_{l=1}^{n-1} |T_{n-1} \cup \bigcup_{i=1}^{n-l} S_{n,i}| + (n-1)^2 |T_{n-1}| \Big) \\ &
		\leq 2 + 2\sum_{l=1}^{n-1} \frac{|\bigcup_{i=1}^{n-l} S_{n,i}|}{|T_{n}|} + 2n^2 \frac{|T_{n-1}|}{|T_{n}|} \\&
		\leq 6 + 2\sum_{l=1}^{n-1} \frac{4}{(n-l)^{p_0}} \leq 6 + 8 \sum_{l=1}^{\infty} l^{-p_0} \leq N,
		\end{split}
		\end{align}
		for some fixed integer $N = N(p_0)$. 
		Now, take $g$ defined by the formula
		\begin{displaymath}
		g(x_{n,i,j}) = i + \sum_{k=1}^{n-1} k , \qquad j=0, \dots, m_{n,i}, \ i=1, \dots, n, \ n \in \mathbb{N}.
		\end{displaymath}
		It is easy to check that $g \in BMO(\mathbb{X})$ since for each $B \subset X$ at least one of the estimates (\ref{b5}) and (\ref{b10}) holds with $p$ replaced by $1$. Indeed, to obtain these inequalities for $f$ earlier we only used the information that $|f(x) - f(y)| \leq 2$ for any neighboring points $x$ and $y$. Our function $g$ satisfies this condition as well. Also (\ref{b11}) remains true if we put $g$ in place of $f$. Now, let $n \geq 2$ and take $B = T_{n}$. Observe that
		\begin{equation}\label{b12}
		|g(x) - g_B| \geq n - i - N, \qquad x \in S_{n, i}, \ i = 1, \dots, n.
		\end{equation}
		Therefore, if $n \geq 4N$, then by using (\ref{b4}) and (\ref{b12}) we have
		\begin{align}\label{b13}
		\begin{split}
		\frac{1}{|B|} \sum_{x \in B} |g(x) - g_B|^{p_0} & \geq \frac{1}{|B|} \sum_{l=1}^{\infty} p_0 \, (l-1)^{p_0-1} \, |\{x \in B \colon |g(x) - g_B| > l\}| \\ & \geq \frac{1}{|T_{n}|} \sum_{l=2}^{n-N-1} p_0 \, (l-1)^{p_0-1} \, \big|\bigcup_{i=1}^{n-N-l} S_{n,i} \big| \\ 
		& \geq \frac{p_0}{4}  \sum_{l=2}^{n-N-1} (l-1)^{p_0-1} \Big( \frac{1}{(N+l-1)^{p_0}} - \frac{1}{(n+1)^{p_0}} \Big) \\ &
		\geq \frac{p_0}{8}  \sum_{l=2}^{\lfloor n/2+3/2-N \rfloor} \frac{(l-1)^{p_0-1}}{(N+l-1)^{p_0}} \\ &
		\geq \frac{p_0}{2^{p_0+3}}  \sum_{l=N+1}^{\lfloor n/2+3/2-N \rfloor} (l-1)^{-1},
		\end{split}
		\end{align}
		since $(N+l-1)^{-p_0} \geq 2 (n+1)^{-p_0}$ for $l \leq \lfloor n/2+3/2-N \rfloor$ and $N+l-1 \leq 2(l-1)$ for $l \geq N+1$. Letting $n \rightarrow \infty$ we conclude that $g \notin BMO^{p_0}(\mathbb{X})$. $\raggedright \hfill \qed$
	\end{proof1}

At the end of this section we will be interested in test spaces $\mathbb{X}$ for which $BMO^p(\mathbb{X})$ coincides with $BMO(\mathbb{X})$ if and only if $p \leq p_0$ where $p_0 \in [1, \infty)$ is fixed. We can easily get such spaces slightly modifying the previous construction of $M$. Namely, instead of using (\ref{b1}) and (\ref{b2}), we define $m_{n, i}$ for $n \geq 2$ by

\begin{equation}
m_{n, i} = \Big\lfloor \frac{1}{\log(n)+1} \ \Big( \frac{b_{n} }{(n - i + 1)^{p_0}} - \frac{b_{n} }{(n - i + 2)^{p_0}} \Big) \Big\rfloor, \qquad i=1, \dots, n, \tag{C1'}\label{b1'}
\end{equation}
where $b_{n}$ is an even integer so large that
\begin{equation} \tag{C2'} \label{b2'}
|T_{n-1}| \leq \min \Big( \Big\lfloor \frac{1}{\log(n)+1} \ \Big( \frac{b_{n} }{(n + 1)^{p_0}} - \frac{b_{n} }{(n + 2)^{p_0}} \Big) \Big\rfloor, \frac{b_{n}}{n^{2p_0}}\Big). 
\end{equation}
We present a sketch of the proof of Proposition 2.

%\begin{proposition}
%	Fix $p_0 \geq 1$ and let $\mathbb{X} = (X, \rho, | \, \cdot \, |)$ be the test space with $M$ defined by using (\ref{b1'}) and (\ref{b2'}). Then for each $1 < p \leq p_0$ there exists $C_p > 0$ such that $\|f\|_{\ast, p} \leq C_p \|f\|_\ast$ for every $f \in BMO(\mathbb{X})$. On the other hand, there exists $g \in BMO(\mathbb{X})$ such that $g \notin BMO^{p}(\mathbb{X})$ for all $p > p_0$. 
%\end{proposition}

\begin{proof2}
For a fixed $p_0 \geq 1$ we let $\mathbb{X} = (X, \rho, | \, \cdot \, |)$ be the test space with $M$ defined by using (\ref{b1'}) and (\ref{b2'}). 
We show that for each $1 < p \leq p_0$ there exists $C_p > 0$ such that $\|f\|_{\ast, p} \leq C_p \|f\|_\ast$ for every $f \in BMO(\mathbb{X})$. To obtain this it suffices to observe that
\begin{displaymath}
p \, 4^p \Big( 1 + \frac{1}{\log(n)+1} \sum_{l=1}^{n-1} \frac{4 \cdot (l+1)^{p-1}}{(l+1)^{p_0}} + n^2 \cdot \frac{2(2n^2)^{p-1}}{n^{2p_0}} \Big),
\end{displaymath}
is bounded uniformly in $n$ if $p \leq p_0$. This allows us to get a proper variant of the estimate $(\ref{b10})$ for that $p$. 

Now we prove that for $g \in BMO(\mathbb{X})$ defined exactly in the same way as in the proof of Proposition 1 we have $g \notin BMO^{p}(\mathbb{X})$ for all $p > p_0$. To see this note that if $p > p_0$, then the estimates analogous to (\ref{b11}) and (\ref{b13}) remain true. Namely, for $B = T_{n}$ one can get
\begin{displaymath}
	|g_B - g(x_{n, n, 0})| \leq N,
\end{displaymath}
where $N$ is an integer independent of $n$, and
\begin{displaymath}
\frac{1}{|B|} \sum_{x \in B} |g(x) - g_B|^{p}
\geq \frac{p \, (\log(n)+1)^{-1}}{2^{p_0+3}}   \sum_{l=N+1}^{\lfloor n/2+3/2-N \rfloor} (l-1)^{p-p_0-1}.
\end{displaymath}
It is now clear that for $p > p_0$ the quantity on the right hand side tends to $\infty$ with $n \rightarrow \infty$. 
\end{proof2}

\section{Some related constructions}

In the last section we consider several variants of the discussed construction process in order to obtain test spaces with another interesting properties. Our first goal is to show that if the entries of the matrix $M$ grow fast enough, then the John--Nirenberg inequality holds for functions $f \in BMO(\mathbb{X})$. This result may be a little surprising at first, since we know that the John--Nirenberg inequality holds for every doubling metric measure spaces. Keeping that in mind, one may suppose that $\mathbb{X}$ should have rather little chance of preserving this property if we force the terms $m_{n,i}$ to grow fast. However, observe that in Section 5 the ratios between the values $m_{n,1}, \dots, m_{n,n}$ played a crucial role in estimating the mean oscillation of the studied functions and the obtained estimates were stronger for the smaller values of $m_{n,i} / m_{n,n}$, $i = 1, \dots, n-1$.

To formulate the next proposition in a more readable way it is convenient to identify the matrix $M$ with the sequence $M' = (m'_1, m'_2, \dots)$ formed by writing the entries of $M$ row by row, that is $M' = (m_{1,1}, m_{2,1}, m_{2,2}, m_{3,1}, \dots)$. In what follows, for simplicity, we use $M$ based on the geometric sequence $\{ 2^{k-1}\}_{k=1}^\infty$. Nevertheless, it will be clear that the presented proof also works for any lacunary sequence $\{m'_k\}_{k=1}^\infty$, that is a sequence satisfying $m'_{k+1}/m'_k \geq c$, $k \in \mathbb{N}$, for some fixed constant $c > 1$. 

\begin{proposition}
	Let $\mathbb{X} = (X, \rho, | \, \cdot \, |)$ be the test space with $M$ identified with the geometric sequence $\{ 2^{k-1}\}_{k=1}^\infty$. Then for the space $BMO(\mathbb{X})$ the John--Nirenberg inequality 
\begin{equation}\label{b14}
\frac{|\{x \in B \colon |f(x) - f_B| > \lambda \}|}{|B|} \leq c_1 \exp(-c_2 \lambda / \|f\|_\ast),
\end{equation}
holds with constants $c_1, c_2 >0$ independent of $f \in BMO(\mathbb{X})$, $B \subset X$ and $\lambda > 0$.
\end{proposition}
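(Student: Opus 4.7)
The plan is to reduce the John--Nirenberg inequality to the stronger $L^p$ bound $\|f\|_{\ast,p} \leq Cp \|f\|_\ast$ (with $C$ a universal constant valid for every $p \geq 1$), from which $(\ref{b14})$ follows via Chebyshev's inequality applied to $|f - f_B|^p$ together with the optimal choice of $p$ proportional to $\lambda/\|f\|_\ast$. After normalising $\|f\|_\ast = 1$---so that neighbouring points satisfy $|f(x) - f(y)| \leq 2$---the trichotomy of balls from Section 5 is still in force: balls $B \subset \mathcal{N}_x$ have oscillation at most $4$ and contribute trivially, so it suffices to control the $L^p$ oscillation on $B = T_n$ and $T_n \cup \{x_{n+1,1,0}\}$. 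On such a ball I would rerun the layer-cake computation $(\ref{b10})$ with the pivot $c = f(x_{n,n,0})$ in place of $f_B$.

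The key technical ingredient is the lacunary replacement of $(\ref{b6})$--$(\ref{b7})$: in the geometric case one has $|T_{n-1}|/|T_n| \leq C \cdot 2^{-n}$ and $|\bigcup_{i \leq i_0} S_{n,i}|/|T_n| \leq C \cdot 2^{i_0 - n}$. These should lead to the exponential distributional estimate
\begin{displaymath}
\frac{|\{x \in T_n \colon |f(x) - f(x_{n,n,0})| > 2l\}|}{|T_n|} \leq K e^{-\alpha l}, \qquad l \geq 0,
\end{displaymath}
for some universal constants $K, \alpha > 0$. Since $|f(x) - c|$ is bounded by twice the graph distance from $x$ to $x_{n,n,0}$ in the neighbour graph, the exceptional set for $l \leq n$ is contained in $T_{n-1} \cup \bigcup_{i=1}^{n-l} S_{n,i}$, for which the lacunary ratios immediately yield the bound. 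For $l > n$ the exceptional set lies in $T_{n-1}$; here I would argue by induction on $n$: the graph distance from $x_{n-1,n-1,0}$ to $x_{n,n,0}$ equals $n$, hence $|f(x_{n-1,n-1,0}) - c| \leq 2n$, reducing the problem on $T_{n-1}$ to the analogous statement with parameter $l - n$, and the size factor $|T_{n-1}|/|T_n|$ then absorbs the resulting shift in center.

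The hard part will be closing this induction cleanly: the ratio $|T_{n-1}|/|T_n|$ is only $(1 + o(1)) \cdot 2^{-n}$---not uniformly $\leq 2^{-n}$---so the decay rate has to be taken strictly below $\ln 2$, in order for $|T_{n-1}|/|T_n| \cdot e^{\alpha n}$ to tend to zero and the inductive step to close for all $n$ above some threshold $n_0$; the finitely many base cases $n \leq n_0$ are then absorbed into the constant $K$. Once the exponential distributional bound is in hand, inserting it into the sum in $(\ref{b10})$ and invoking the standard Gamma-function identity $\sum_{l \geq 0}(l+1)^{p-1} e^{-\alpha l} = O((p/\alpha)^p / e^p)$ delivers $\|f\|_{\ast, p} \leq Cp$, which gives $(\ref{b14})$ with constants $c_1, c_2$ depending only on $\alpha$ and $K$.
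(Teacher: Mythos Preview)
Your approach is correct, but it is considerably more elaborate than the paper's. Two simplifications in the paper deserve mention.

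First, the detour through $\|f\|_{\ast,p}\leq Cp$ and Chebyshev is unnecessary. The exponential distributional bound you aim for,
\[
\frac{|\{x\in T_n:\ |f(x)-f(x_{n,n,0})|>2l\}|}{|T_n|}\leq K e^{-\alpha l},
\]
already \emph{is} the John--Nirenberg inequality up to the pivot shift $|f_B - f(x_{n,n,0})|\leq N$ furnished by the analogue of \eqref{b11}. The paper stops there; your conversion to an $L^p$ bound and back only reproduces the same exponential estimate with worse constants.

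Second, and more substantively, the induction on $n$ for the range $l>n$ (and the attendant worry about $|T_{n-1}|/|T_n|$ not being uniformly $\leq 2^{-n}$, forcing $\alpha<\ln 2$) is avoided entirely by the paper's global enumeration of branches. Writing $k=n(n+1)/2$, the branches of $T_n$ in spine order carry exactly the sizes $m'_1+1,\dots,m'_k+1$ with $m'_j=2^{j-1}$, and a point at graph distance $>l'$ from $x_{n,n,0}$ must lie in one of the first $k-l'$ branches. Hence
\[
|\{x\in B:\ |f(x)-f(x_{n,n,0})|>2l'\}|\ \leq\ \sum_{j\leq k-l'}(2^{j-1}+1)\ \leq\ 2^{k-l'+1},
\]
valid for \emph{all} $l'\geq 0$ in one stroke; since $|B|\geq 2^k$ this yields the exponential bound with $\alpha=\ln 2$ and $K=2$, no induction and no case split. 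What the global viewpoint buys is that the lacunarity is across the full sequence $(m'_j)$, not merely within a single row $S_n$, so the regime $l>n$ is not special.
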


\begin{proof}
Let $f \in BMO(\mathbb{X})$ be such that $\|f\|_\ast = 1$. First, observe that the main difficulty in proving (\ref{b14}) is related to the situation in which $B$ as a set coincides with $T_{n}$ or $T_{n} \cup \{x_{n+1, 1, 0}\}$ for some $n \geq 2$. Indeed, for any other ball $B'$ we have $\max \{ |f(x) - f(y)| \colon x, y \in B'\} \leq 4$ and hence (\ref{b14}) with $B'$ in place of $B$ holds for any $\lambda > 0$ if we choose $c_1$ and $c_2$ such that $c_1 \exp(-4c_2) \geq 1$. Therefore, fix $n \geq 2$ and consider $B$ of the aforementioned form. Note that $2^{k} \leq |B| \leq 2^{k+1}$ where $k = \frac{n(n+1)}{2}$. Once again we will take advantage of the useful property that $|f(x) - f(y)| \leq 2$ for neighboring points $x$ and $y$. Proceeding just like we did before to get (\ref{b11}) we can estimate the value $|f_B - f(x_{n,n,0})|$ by some even integer $N$ which is independent of $f$, $n$ and the choice of $B$. Then for any integer $l \geq N$ we have
\begin{align*}
|\{x \in B \colon |f(x) - f_B| > 2 l \}| & \leq |\{x \in B \colon |f(x) - f(x_{n,n,0})| > 2(l - N/2) \}| \\ 
& \leq 2^{k-l+N/2+1} \\
&\leq 2^{N/2+1} 2^{-l} |B|,
\end{align*}
and now it is routine to choose $c_1$ and $c_2$ (independent of significant parameters) such that (\ref{b14}) holds for all $\lambda > 0$ and $B \subset X$ of an arbitrary form.
\end{proof}

For the presentation of the remaining two results we return to the matrix description of the space $\mathbb{X}$. We construct $M$ in a similar way as it was done earlier by using (\ref{b1}) and (\ref{b2}), but this time we choose the parameter $p_0$ separately in each step of induction. Namely, let $P = (p_2, p_3, \dots)$ be a sequence of numbers strictly bigger than $1$. We define $m_{n, i}$ for $n \geq 2$ by

\begin{equation} \tag{C1*} \label{b1*}
m_{n, i} = \Big\lfloor \frac{b_{n} }{(n - i + 1)^{p_{n}}} - \frac{b_{n} }{(n - i + 2)^{p_{n}}} \Big\rfloor, \qquad i=1, \dots, n, 
\end{equation}
where $b_{n}$ is an even integer so large that
\begin{equation} \tag{C2*} \label{b2*}
|T_{n-1}| \leq \min \Big( \Big\lfloor \frac{b_{n} }{(n + 1)^{p_{n}}} - \frac{b_{n} }{(n + 2)^{p_{n}}} \Big\rfloor, \frac{b_{n}}{n^{2p_{n}}}, \frac{b_{n}}{n^{n}}\Big). 
\end{equation}

Our next purpose will be to show that by a suitable choice of $P$ it is possible to obtain a space $\mathbb{X}$ for which the associated spaces $BMO^p(\mathbb{X})$ are all different. Although this result is not very revealing in view of Theorem 1, its advantage lies in the fact that the proof presented below, contrary to the proof of Theorem 1, is constructive. Namely, for each $1 \leq p_1 < p_2 < \infty$ we construct $f \in BMO^{p_1}(\mathbb{X}) \setminus BMO^{p_2}(\mathbb{X})$. In the following proposition we take $P$ formed by writing the elements of some countable dense subset of $(1, \infty)$ in an arbitrary order. We can use the set $\mathbb{Q} \cap (1, \infty)$, for example.

\begin{proposition}
	Let $P$ be the sequence defined as above and let $\mathbb{X} = (X, \rho, | \, \cdot \, |)$ be the test space with $M$ defined by using (\ref{b1*}) and (\ref{b2*}). Then for each $1 \leq p < p' < \infty$ there exists $g \in BMO^p(\mathbb{X})$ such that $g \notin BMO^{p'}(\mathbb{X})$.
\end{proposition}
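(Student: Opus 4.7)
The plan is to adapt the construction of $g$ from the proof of Proposition 1 to the situation where the exponent $p_n$ varies with $n$ and may cluster near any point of $(1, \infty)$. Fixing $1 \leq p < p' < \infty$, I first choose an auxiliary exponent $p_*$ with $p < p_* < p'$; since $P$ is dense in $(1, \infty)$ by construction, the set
\[
\mathcal{A} = \{n \geq 2 : p_n \in (p_*, p')\}
\]
is infinite. The central idea is to let $g$ carry the Proposition 1 staircase structure only on the branches $S_n$ with $n \in \mathcal{A}$, and to keep it constant on the other $S_n$'s. Restricting the oscillation to ``active'' levels where $p_n$ is uniformly bounded away from both $p$ and $p'$ is precisely what will make the sums appearing in the analogues of (\ref{b10}) and (\ref{b13}) behave uniformly in $n$.

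Concretely, I would set $v_1 = 0$, recursively define $v_{n+1} = v_n + (n-1)\mathbf{1}_{\{n \in \mathcal{A}\}}$, and put
\[
g(x_{n,i,j}) = \begin{cases} v_n + (i-1), & n \in \mathcal{A}, \\ v_n, & n \notin \mathcal{A}, \end{cases}
\]
for all admissible $i,j$. A direct check gives $|g(x) - g(y)| \leq 1$ for every pair of neighbors, so the argument of (\ref{b11}) still yields $|g_B - g(x_{n,n,0})| \leq N$ with $N$ depending only on $p_*$, whenever $B$ is of the form $T_n$ or $T_n \cup \{x_{n+1,1,0}\}$.

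To bound $\|g\|_{\ast, p}$ I would split balls into three types. For $B \subset \mathcal{N}_x$, the triangle inequality gives oscillation at most $4^p$. For $B = T_n$ or $T_n \cup \{x_{n+1,1,0}\}$ with $n \notin \mathcal{A}$, the function $g$ is constant on $S_n$, which dominates $B$; using $|T_{n-1}|/|B| \leq 2/n^n$ from (\ref{b2*}) together with the crude bound $v_n \leq n^2$, the oscillation is of order $v_n^p / n^n$, hence uniformly bounded in $n$. For $B = T_n$ with $n \in \mathcal{A}$, I run the chain of inequalities (\ref{b10}) verbatim with $p_n$ in place of $p_0$; uniformity follows because the resulting tail $\sum_l (l+1)^{p - p_n - 1}$ converges with a bound depending only on $p_*$, since $p - p_n - 1 \leq p - p_* - 1 < -1$ for every $n \in \mathcal{A}$.

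For $g \notin BMO^{p'}$, I would specialize to the balls $B = T_n$ with $n \in \mathcal{A}$ and replicate the calculation (\ref{b13}) with $p_n$ in place of $p_0$ and $p'$ in place of the target exponent, arriving at a lower bound of the form
\[
\frac{1}{|B|} \sum_{x \in B} |g(x) - g_B|^{p'} \geq C \sum_{l=N+1}^{\lfloor n/2+3/2-N \rfloor}(l-1)^{p' - p_n - 1},
\]
for some $C > 0$ independent of $n$. Since $n \in \mathcal{A}$ forces $p_n < p'$, the exponent $p' - p_n - 1$ exceeds $-1$, so the partial sums diverge as $n \to \infty$ along $\mathcal{A}$, proving $\|g\|_{\ast, p'} = \infty$. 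The main delicate point is that different active levels come with different $p_n$, so the constants in the upper and lower estimates must be forced to depend only on $p_*$ (and $p$, $p'$) rather than on the individual $p_n$; sandwiching all active exponents inside the fixed subinterval $(p_*, p')$ is exactly what secures this uniformity.
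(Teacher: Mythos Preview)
Your approach is essentially the same as the paper's. The paper defines the active set via the closed interval $A=[\frac{p+p'}{2},p']$ and puts
\[
g(x_{n,i,j}) = i\,\chi_A(p_n) + \sum_{k=1}^{n-1} k\,\chi_A(p_k),
\]
then remarks that the argument reduces to reproducing the analogues of \eqref{b10}, \eqref{b11} and \eqref{b13}; your construction with $\mathcal{A}=\{n:p_n\in(p_*,p')\}$ and the staircase $v_n+(i-1)\mathbf{1}_{\{n\in\mathcal{A}\}}$ differs only by a harmless shift and the choice of a half-open versus closed window, and your sketch of the three estimates matches the paper's outline.
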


\begin{proof}
	Fix $1 \leq p < p' < \infty$ and let $A = A(p,p') = [\frac{p+p'}{2}, p']$. We take $g$ defined by the formula
\begin{equation*}
g(x_{n,i,j}) = i \cdot \chi_A(p_n) + \sum_{k=1}^{n-1} k \cdot \chi_A(p_k), \qquad j=0, \dots, m_{n,i}, \ i=1, \dots, n, \ n \in \mathbb{N}.
\end{equation*}	
Note that $g$ is similar to the analogous function considered in the proof of Proposition 1, but this time it grows only in those $S_n$ for which the corresponding values $p_n$ belong to $A$. It is a standard procedure to show that $g \in BMO^p(\mathbb{X}) \setminus BMO^{p'}(\mathbb{X})$ and most of the work consists of proving the appropriate variants of the estimates (\ref{b10}), (\ref{b11}) and (\ref{b13}).
\end{proof}

We conclude our studies with an example of a test space $\mathbb{X}$ for which the associated spaces $BMO^p(\mathbb{X})$ coincide for the full range of the parameter $p$, but the John--Nirenberg inequality does not hold. Namely, we will prove the following.

\begin{proposition}
	There exists a (test) space $\mathbb{X}$ with the following properties:
	\begin{enumerate}[label=(\roman*)]
		\item for each $p > 1$ there exists $C_p > 0$ such that $\|f\|_{\ast, p} \leq C_p \|f\|_\ast$ for every $f \in BMO(\mathbb{X})$, 
		\item there exists $g \in BMO(\mathbb{X})$ such that for each $l \in \mathbb{N}$ we can find $B_l \subset X$ and $\lambda_l > 0$ satisfying
		\begin{equation*}
		\frac{|\{x \in B_l \colon |g(x) - g_{B_l}| > \lambda_l \}|}{|B_l|} > l \exp(- \lambda_l / l). 
		\end{equation*}
	\end{enumerate}
\end{proposition}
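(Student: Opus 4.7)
The plan is to construct $\mathbb{X}$ by the recipe (\ref{b1*})--(\ref{b2*}), with the sequence $P = (p_n)_{n\geq 2}$ chosen to diverge to infinity slowly; for concreteness I take $p_n = \log n + 2$. The intuition is that on the critical balls $B = T_n$ the distribution of any $f \in BMO(\mathbb{X})$ has a polynomial tail of order $\lambda^{-p_n}$. Letting $p_n \to \infty$ then controls every $L^p$-moment uniformly and gives (i), while the decay remains merely polynomial on each individual ball, which is the loophole that permits a single function $g$ to violate the John--Nirenberg inequality, as required by (ii).

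For (i) I would repeat the computation (\ref{b10}) almost verbatim, with $p_n$ playing the role of the global $p_0$. Every ball $B$ either lies in some $\mathcal{N}_x$, and then the trivial bound (\ref{b5}) applies, or coincides with $T_n$ or $T_n \cup \{x_{n+1,1,0}\}$ for some $n \geq 2$, in which case the standard telescoping produces
\begin{displaymath}
\frac{1}{|B|}\sum_{x \in B} |f(x) - f_B|^p \leq C_p \Big( 1 + \sum_{l=1}^{n-1} (l+1)^{p-p_n-1} + n^{2(p-p_n)} \Big).
\end{displaymath}
For any fixed $p > 1$, the assumption $p_n \to \infty$ yields some $n_0(p)$ beyond which $p_n \geq p+2$; for such $n$ the tail sum is dominated by $\zeta(2)$ and the last term by $n^{-4}$, while the finitely many smaller $n$ contribute uniformly finite constants. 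This gives $\|f\|_{\ast,p} \leq C_p \|f\|_\ast$ and hence $BMO^p(\mathbb{X}) = BMO(\mathbb{X})$.

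For (ii) I would adopt the function $g(x_{n,i,j}) = i + \sum_{k=1}^{n-1} k$ from the proof of Proposition 1. The case $p=1$ of the previous step shows $g \in BMO(\mathbb{X})$, and the argument leading to (\ref{b11}) (using $\zeta(p_n) \leq \zeta(2)$ uniformly) furnishes a constant $N$, independent of $n$, with $|g_{T_n} - g(x_{n,n,0})| \leq N$. The lower bound (\ref{b4}) adapted to (C1*) then gives
\begin{displaymath}
\frac{|\{x \in T_n \colon |g(x) - g_{T_n}| > \lambda\}|}{|T_n|} \geq \frac{1}{8(\lambda + N + 2)^{p_n}}
\end{displaymath}
whenever $\lambda + N + 2 \leq n/2$. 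Given a sufficiently large $l \in \mathbb{N}$, I set $n(l) = l^3$, $B_l = T_{n(l)}$ and $\lambda_l = 3 l \, p_{n(l)} \log(l p_{n(l)})$. Since $p_{n(l)} \leq 3\log l + 2$, one has $\lambda_l = O(l(\log l)^2)$, which is comfortably below $n(l)/2$, and a routine logarithmic comparison shows
\begin{displaymath}
\frac{\lambda_l}{l} - p_{n(l)} \log(\lambda_l + N + 2) > \log l + \log 8,
\end{displaymath}
which after exponentiation is exactly the inequality required by (ii); the finitely many remaining small values of $l$ can be handled by an ad hoc choice of ball on which $g$ is nonconstant.

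The principal obstacle is the balancing act between (i) and (ii): growing $p_n$ quickly makes (i) easier but shrinks the admissible range $\lambda + N + 2 \leq n/2$ in which the polynomial lower bound on the distribution is valid, which makes (ii) harder. The logarithmic choice $p_n = \log n + 2$ sits comfortably in the sweet spot, but in fact any sequence with $p_n \to \infty$ and $p_n = o(n/\log n)$ would support the same argument.
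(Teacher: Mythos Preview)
Your proposal is correct and follows the paper's strategy: build a test space via (\ref{b1*})--(\ref{b2*}) with $p_n\to\infty$, reuse the function $g$ from Proposition~1, and exploit that on $T_n$ the distribution of $g$ has only a polynomial tail of order $p_n$. The verification of (i) is the same in both.

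The execution for (ii) differs. You fix an explicit analytic sequence $p_n=\log n+2$, then pick $n(l)=l^3$ and a carefully tuned $\lambda_l$, and close with an asymptotic logarithmic comparison. The paper instead constructs $P$ \emph{inductively}: starting from $p_2=2$, it keeps $p_{n+1}=p_n=l$ as long as
\[
\tfrac14\bigl(n^{-l}-(n+1)^{-l}\bigr)\le l\,e^{-(n-N-1)/l},
\]
and bumps $p_{n+1}$ to $l+1$ the first time this fails. With this definition the check of (ii) becomes a one-liner: for each $l$ take $n=\max\{k:p_k=l\}$, $B_l=T_n$, $\lambda_l=n-N-1$, and the desired inequality is literally the switching condition combined with the lower bound~(\ref{b4}). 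The paper's trick thus avoids your asymptotic calculation and the separate treatment of small $l$, at the cost of a less explicit $P$; your approach has the advantage of being concrete and of making transparent which growth rates of $p_n$ work (as you note, $p_n\to\infty$ with $p_n=o(n/\log n)$ suffices).

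Two small points of precision: your displayed lower bound should strictly read $1/\bigl(8(\lambda+N+3)^{p_n}\bigr)$ rather than $\lambda+N+2$ (because of the ceiling on $\lambda$), which is harmless; and the sentence ``the case $p=1$ of the previous step shows $g\in BMO(\mathbb{X})$'' is slightly elliptical---what you mean, and what the paper spells out, is that the estimates in (i) use only the property $|f(x)-f(y)|\le 2$ for neighbours, which $g$ enjoys by construction.
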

\begin{proof}
The space will be built by using $M$ constructed with an aid of (\ref{b1*}) and (\ref{b2*}) for some suitable sequence $P$ of positive integers. The key idea is to choose $P$ such that $p_n$ tends to $\infty$ very slowly. 

First, notice that the sole assumption $p_n \rightarrow \infty$ implies $(i)$. Indeed, let $f$ be such that $\|f\|_\ast = 1$. Observe that for each $p > 1$ there exists $N_0 = N_0(p) \geq 2$ such that $p_{n} \geq p + 1$ for all $n \geq N_0$. Therefore, (\ref{b10}) holds with $p + 1$ instead of $p_0$ for every $B$ of the form $T_{n}$ or $T_{n} \cup \{x_{n+1, 1, 0}\}$, $n \geq N_0$. Since for any other choices of $B$ there exists $K = K(p)$ independent of that $B$ (and $f$, of course) such that $\max\{|f(x) - f(y)| \colon x, y \in B\} \leq K$ we see that $(i)$ holds. 

It remains to show that with additional assumptions imposed on $P$ also $(ii)$ holds true. To be more specific slow growth of $p_n$ will suffice. Let $p_2 = 2$ and assume for convenience that $P$ is nondecreasing. We claim that there exists $N \in \mathbb{N}$ such that for any $f$, $\|f\|_\ast=1$, it holds $|f_B - f(x_{n,n,0})| \leq N$ for $B = T_{n}$, $n \geq 2$. Indeed, it suffices to see that now the estimate (\ref{b11}) with $p_0$ replaced by $2$ holds. We are ready to define $P$ inductively. Suppose that $p_n = l$ for some $n \geq 2$. We define $p_{n+1}$ by the formula
\begin{equation}\label{b15}
p_{n+1} = \left\{ \begin{array}{rl}
l & \textrm{if }  \frac{1}{4} \big( n^{-l} - (n+1)^{-l}\big) \leq l \, \exp(-(n-N-1)/l), \\

l+1 & \textrm{otherwise. }  \end{array} \right.
\end{equation}
Clearly, $p_n$ is nondecreasing and $p_n \rightarrow \infty$. 

Finally, take $g$ defined exactly in the same way as in the proof of Proposition 1. Of course, $g \in BMO(\mathbb{X})$. Fix $l \in \mathbb{N}$ such that $l \geq 2$ and let $n = n(l) = \max\{k \colon p_k = l\}$. Then by using (\ref{b4}) and (\ref{b15})

\begin{align*}
\frac{|\{ x \in T_{n} \colon |g(x)-g_{T_{n}}| \geq n-N-1 \}|}{|T_{n}|} & \geq \frac{|\{ x \in T_{n} \colon |g(x)-g(x_{n, n, 0})| \geq n-1 \}|}{|T_{n}|} \\
& \geq \frac{|S_{n,1}|}{|T_{n}|} \geq \frac{1}{4} \big( n^{-l} - (n+1)^{-l}\big) \\ & \geq l \, \exp(-(n-N-1)/l),
\end{align*}
and therefore we obtain that $(ii)$ holds for $B_k = T_{n}$ and $\lambda_l = n - N - 1$.
\end{proof}

\end{document}